\documentclass[11pt]{article}
\usepackage{graphicx}
\usepackage{fullpage}
\usepackage{amsmath}
\usepackage{amssymb}
\usepackage{verbatim} %multiline comments

\usepackage{tikz}

\usepackage{ulem} %strikethrough \sout{}

\usepackage{bbm} %blackboard bold for numbers
\usepackage{amsthm} % theorems

\newtheorem{thm}{Theorem}[section]
\newtheorem{defn}[thm]{Definition}
\newtheorem{lem}[thm]{Lemma}

\newtheorem{conj}[thm]{Conjecture}

\title{Some Bounds on the Rainbow Connection Number of 3-, 4- and 5-connected Graphs}
\author{Irene Y. Lo}

\begin{document}

\maketitle
\normalem

\begin{abstract}
The rainbow connection number, $rc(G)$, of a connected graph $G$ is the minimum
number of colors needed to color its edges so that every pair of vertices is connected
by at least one path in which no two edges are colored the same. We show that for $\kappa =3$ or $\kappa = 4$, every $\kappa$-connected graph $G$ on $n$ vertices with diameter $\frac{n}{\kappa}-c$ satisfies $rc(G) \leq \frac{n}{\kappa} + 15c + 18$. We also show that for every maximal planar graph $G$, $rc(G) \leq \frac{n}{\kappa} + 36$. This proves a conjecture of Li et al. for graphs with large diameter and maximal planar graphs.
\end{abstract}

\section{Introduction}
Let $G$ be a simple, undirected, connected graph on $n$ vertices, such that its edges are colored by some edge coloring $c$. We say that a path $P$ in $G$ is a \textit{rainbow path} if no two edges of $P$ are the same color. We say that the edge-colored graph $(G, c)$ is \textit{rainbow-connected} if every pair of vertices is connected by a rainbow path, and that the coloring $c$ is a \textit{rainbow coloring} of the graph $G$. The \textit{rainbow connection number} of a graph $G$, $rc(G)$, is the minimum number of colors required to rainbow color $G$. For example, the rainbow connection number of a complete graph is $1$, that of a path, or in general, any tree, is $n-1$, and that of a cycle is $\lceil \frac{n}{2} \rceil$. A basic introduction to the subject can be found in Chapter 11 of \cite{surveybook}.

The concept of rainbow coloring was introduced by Chartrand, Johns, McKeon and Zhang in 2008 \cite{chartrand}. Computing the rainbow connection number of a graph was later shown to be NP-hard by Chakraborty, Fischer, Matsligh and Yuster \cite{chakraborty}. However, the rainbow connection number is still of interest as a `quantifiable' extension of the concept of connectivity in a graph \cite{caro, chakraborty}. In particular, there has been much interest in finding tight upper bounds for the rainbow connection number in terms of other measures of connectivity.

Recent results presented in the literature are as follows. Basavaraju et al. proved an upper bound in terms of radius, $rc(G) \leq r(r+2)$, and showed that it is tight \cite{bas}. Chandran et al. proved an upper bound in terms of minimum degree, $rc(G) \leq 3n/(\delta+1)+3$, and showed that it is tight up to additive factors \cite{chandran}. Upper bounds in terms of vertex and edge connectivity, $rc(G) \leq 3n/(\kappa+1)+3$ and $rc(G) \leq 3n/(\lambda+1)+3$, follow trivially, and Li et al. showed that the bound in terms of edge connectivity is tight up to additive factors for infinitely many values of $n$ and $\lambda$ \cite{li12}. Improving the bound in terms of vertex connectivity, Ekstein et al. and Li et al. independently proved that $rc(G) \leq \left\lceil\frac{n}{2}\right\rceil$ for 2-connected graphs, and that the bound is tight \cite{ekstein, li12}. In addition, Li et al. proved that $rc(G) \leq (2 + \varepsilon) \frac{n}{\kappa} + \frac{23}{\varepsilon^2}$ for any $\varepsilon > 0 $, so that $rc(G) \leq \frac{n}{\kappa} + C_0$ for graphs of high girth \cite{li12}. This led them to make the following conjecture.

\begin{conj}[Li et al. \cite{li12}]\label{mainconj}
Let $G$ be a $\kappa$-connected graph on $n$ vertices. Then there exists a constant $C$ such that
\begin{eqnarray*}
rc(G) < \frac{n}{\kappa} + C.
\end{eqnarray*}
\end{conj}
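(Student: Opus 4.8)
The plan is to reduce rainbow-coloring $G$ to rainbow-coloring a small connected ``backbone'' and then to pay only a bounded overhead for the vertices off the backbone. First I would extract a connected dominating set $D$ in which every vertex of $V(G)\setminus D$ has at least two neighbours in $D$; since $\kappa\ge 2$ this is easy to arrange, and a by-now-standard argument then gives $rc(G)\le rc(G[D])+O(1)$, because one can rainbow-color a spanning subgraph of $G[D]$ and spend a bounded number of fresh colors on the edges from $V(G)\setminus D$ into $D$ so that every off-backbone vertex enters the backbone by one color and leaves it by another. As $\delta\ge\kappa$, classical bounds give $|D|\le 3n/(\kappa+1)$, so the task sharpens to producing a backbone with $rc(G[D])\le n/\kappa+O(1)$; the whole difficulty is removing the factor $3$ here (or the factor $2$ of Li et al.) down to $1$.

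I would organise the sharpening by a case split on the diameter, the two extremes being tractable. When the diameter is very small, say the radius satisfies $r\lesssim\sqrt{n/\kappa}$, the radius bound $rc(G)\le r(r+2)$ of Basavaraju et al. already gives $rc(G)\le n/\kappa+O(1)$ outright. When the diameter is very large, within an additive constant of its maximum possible value $\approx n/\kappa$, the graph is forced to be long and thin, one has the matching lower bound $rc(G)\ge\mathrm{diam}(G)\ge n/\kappa-O(1)$, and I would extend the long-backbone construction that handles $\kappa=3,4$ to general $\kappa$: thread a single dominating path of length $\approx n/\kappa$ through the graph, color it with distinct colors, and absorb the $O(1)$ excess vertices per segment locally.

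The crux is the intermediate regime $\sqrt{n/\kappa}\ll\mathrm{diam}(G)\ll n/\kappa$, where neither extreme argument applies and which is precisely why the conjecture is open. My proposed attack is to take a spanning tree $T$ with few leaves -- once the diameter is not too small one expects $G$ to admit a spanning tree with $O(\kappa)$ leaves, equivalently a cover of $V(G)$ by $O(\kappa)$ paths -- and to decompose $T$ into its $O(\kappa)$ topological edges, each a path. I would color these paths from a single \emph{shared} palette, giving the $j$-th edge of every path the one color $j$, so that the palette has size equal to the longest path; a rainbow route between two vertices would then run along its own path to a junction and switch paths across a cross-edge supplied by Menger's theorem, the shared, position-indexed palette keeping the colors along the route distinct. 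The main obstacle, and the reason I expect this to be decisive, is that the scheme only yields $n/\kappa+O(1)$ colors when the $O(\kappa)$ paths are \emph{balanced} at length $\approx n/\kappa$: a $\kappa$-connected graph can contain one long strand that forces a badly unbalanced tree, after which the longest topological edge, and hence the palette, blows up past $n/\kappa$. Splitting such a strand into shorter dominated segments while preserving rainbow rerouting across the $\kappa$ internally disjoint paths guaranteed by connectivity is the step I would attack first, and the one on which the conjecture ultimately turns.
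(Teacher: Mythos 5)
This statement is a \emph{conjecture} of Li et al.\ \cite{li12}; the paper does not prove it and, to date, nobody has. What the paper establishes are two special cases: for $\kappa \in \{3,4\}$ and $diam(G) = \frac{n}{\kappa}-c$ it shows $rc(G) \leq \frac{n}{\kappa}+11c+6$ resp.\ $\frac{n}{\kappa}+15c+18$, and for maximal planar graphs (where $\kappa \leq 5$) it shows $rc(G) \leq \frac{n}{\kappa}+36$ via a connected $\kappa$-step dominating set built from the layers $N^k(F)$ around a face, combined with the lemma $rc(G) \leq rc(D^l)+l^2+2l$ of Basavaraju et al.\ \cite{bas}. Your proposal is commendably honest that it does not close the intermediate-diameter regime, but that candor is also the verdict: as a proof of the statement it has a genuine, self-acknowledged gap, and no choice of constants in your outline produces $rc(G) < \frac{n}{\kappa}+C$ for all $\kappa$-connected $G$.

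Beyond the admitted gap, several concrete steps would fail as written. First, the claim that moderate diameter forces a spanning tree with $O(\kappa)$ leaves (equivalently a cover by $O(\kappa)$ paths) is false: blow up a long path into independent layers of alternating sizes $\kappa$ and $3\kappa$, each layer completely joined to its neighbors. This graph is $\kappa$-connected with diameter $\Theta(n/\kappa)$, yet counting path-edge slots at the small layers (every big-layer vertex needs two incident path edges landing in small layers, unless it is a path endpoint) forces any path cover to have at least $\#\text{big}-\#\text{small} = \Omega(n)$ paths, so every spanning tree has $\Omega(n)$ leaves. Second, the shared position-indexed palette is unsound as stated: two route segments traversing overlapping index windows in the same direction repeat colors, which is precisely why the paper's $\kappa=3$ proof colors the cycle $P_1 \cup P_3$ cyclically with a half-length palette and its $\kappa=4$ proof orients $P_2, P_3$ in reverse, on top of contracting the off-path components $Y_j$ and bounding detour lengths by $3|Y|$ --- your ``absorb the excess locally'' hides exactly this bookkeeping, which in the paper costs the additive terms $11c$ and $15c$ and is only carried out for $\kappa = 3,4$. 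Third, your small-radius extreme needs care: since $r(r+2) = (r+1)^2-1$, the bound of \cite{bas} gives $rc(G) \leq \frac{n}{\kappa}$ only for $r \leq \sqrt{\frac{n}{\kappa}+1}-1$; at threshold $r \approx \sqrt{n/\kappa}$ you incur an additive $O(\sqrt{n/\kappa})$, not $O(1)$. Finally, note that the paper's planar argument is the closest realization of your backbone idea --- select every $\kappa$-th ``wide'' layer $N_k$ plus a transversal path $P$ as the dominating set $D$, then rainbow color $P$ and each chosen layer cheaply --- but it leans essentially on Lemma \ref{lem:2conn_neighbs}, that in a maximal planar graph each layer is $2$-connected, which is exactly the structure a general $\kappa$-connected graph need not supply.
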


\noindent We prove Conjecture \ref{mainconj} for graphs with large diameter and maximal planar graphs.

\section{Preliminary Definitions and Results}

\begin{defn}
\end{defn} \vspace{-23pt}$~~~~~~~~~~~~~~$ Let $G$ be a simple, connected, undirected graph. $G$ is \textit{$k$-connected}, or \textit{$k$-vertex-connected}, if the removal of less than $k$ vertices does not disconnect the graph. Similarly, $G$ is \textit{$k$-edge-connected} if the remove of less than $k$ edges does not disconnect the graph. The \textit{vertex connectivity} of a graph, $\kappa(G)$, is the maximum value of $k$ such that $G$ is $k$-connected, and the \textit{edge connectivity}, $\lambda(G)$, is the maximum value of $k$ such that $G$ is $k$-edge-connected. Given a set $X \subset V(G)$, the graph $G|_X$ is the subgraph of $G$ induced by the set $X$, with vertex set $X$ and edge set given by the subset of edges of $G$ with both ends in $X$. The \textit{degree} of a vertex $deg(v)$ is the number of other vertices adjacent to it, and the \textit{minimum degree of $G$} is $\delta(G): = \min_{v \in V(G)} deg(v)$. A \textit{leaf} of a graph is a vertex with degree $1$.

\begin{defn}
\end{defn} \vspace{-23pt}$~~~~~~~~~~~~~~$ The \textit{length} of a path or cycle is the number of edges in the path or cycle. The \textit{girth} of a graph is the length of shortest cycle in the graph. The \textit{distance} between two vertices $u$ and $v$ of $G$, $d(u,v)$, is the length of the shortest path between them in $G$. The \textit{eccentricity} of a vertex $v$ is $ecc(v) := \max_{u \in V(G)} d(u,v)$. The \textit{diameter} of $G$ is $diam(G) := \max_{u,v \in V(G)} d(u,v)$, and the \textit{radius} of $G$ is $rad(G) := \max_{v\in V(G)} ecc(v)$. The distance from a vertex $v$ to a set $X\subset V(G)$ is $d(v,X) = \min{x \in X} d(v,x)$. The \textit{$l$-step open neighborhood} of a set $X \subset V(G)$ is $N^l(X) = \{v \in V(G) | d(v, X) = l\}$, and the \textit{$l$-step neighborhood} of $X$ is $N^l[X] = \{v \in V(G) | d(v, X) \leq l\}$. A set $D^l \subset V(G)$ is called an \textit{$l$-step dominating set of $G$} if every vertex in $G$ is in the $l$-step neighborhood of $D^l$, and if $G|_D$ is a connected subgraph of $G$, then $D^l$ is called a \textit{connected $l$-step dominating set of $G$}. The \textit{connected $l$-step domination number} of a graph $G$, $\gamma_c^l(G)$, is the minimum possible size for a connected $l$-step dominating set of $G$. 

\begin{defn}
\end{defn} \vspace{-23pt} $~~~~~~~~~~~~~~~~~$ A \textit{planar graph} is a graph $G$ that can be embedded in the plane so as to preserve incidences and prevent crossing edges. Such an embedding of a graph in the plane is known as a \textit{planar embedding} of the graph. A \textit{face} of a planar graph is a minimal region bounded by edges. A \textit{maximal planar graph} $G$ is a planar graph where all faces are triangles.\\

\noindent Li et al. proved that Conjecture \ref{mainconj} holds for high girth graphs.
\begin{thm}[Li et al. \cite{li12}]
Let $G$ be a $\kappa$-connected graph. 
\begin{itemize}
\item[(i)] If $\kappa \geq 3$ and $girth(G) \geq 7$, then $rc(G) \leq \frac{n}{\kappa} + 41$.
\item[(ii)] If $\kappa \geq 5$ and $girth(G) \geq 5$, then $rc(G) \leq \frac{n}{\kappa} +19$.
\end{itemize}
\end{thm}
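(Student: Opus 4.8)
The defined notion of a connected $l$-step dominating set signals the right machinery, so the plan is to run a ``dominate, then extend'' strategy while exploiting the girth to push the leading coefficient down from the trivial $3/\kappa$ to $1/\kappa$. I would fix $l = \lfloor (g-2)/2 \rfloor$, so that $l=1$ when $g \ge 5$ and $l=2$ when $g \ge 7$; the point of this choice is that $g \ge 2l+2$ forces every ball $N^l[v]$ to induce an acyclic subgraph (and, with the extra slack in the $g\ge 5$ case, forces distinct balls to meet in at most one vertex, since there is no $C_4$), so that the ``reaching'' structure I build around the dominating set is a forest. I would then split the argument into a size estimate and a rainbow-extension estimate.

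\emph{Size estimate.} A $\kappa$-connected graph of girth $g$ should admit a connected $l$-step dominating set $D$ with $|D| \le \frac{n}{\kappa} + c_1$ for an absolute constant $c_1$. I would first extract a ``core'' $D_0$ of centers, e.g.\ greedily choosing a vertex whose $l$-ball covers the most not-yet-dominated vertices. Because $\delta(G)\ge\kappa$ and $g \ge 2l+2$, each $l$-ball is a tree of depth $l$ that branches at least $\kappa-1$ times, and the near-disjointness of distinct balls means each chosen center nets at least $\kappa$ previously undominated vertices; hence $|D_0| \le n/\kappa$. The delicate point is turning $D_0$ into a \emph{connected} set while paying only the additive constant $c_1$ rather than a multiplicative factor: here I would invoke $\kappa$-connectivity through Menger's theorem (vertex-disjoint fans) to route connecting paths through already-dominated regions and stitch the centers together with only a bounded amount of Steiner material. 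It is precisely at this step that $\kappa \ge 3$ versus $\kappa \ge 5$ enters and pins down the two constants.

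\emph{Rainbow-extension estimate.} If $G$ has girth $g\ge 2l+2$ and a connected $l$-step dominating set $D$, then I claim $rc(G) \le (|D|-1) + c_2(l)$. First give a spanning tree $T$ of $G|_D$ a set of $|D|-1$ pairwise distinct colors, so that every pair of vertices of $D$ is joined by a rainbow path inside $T$. Next fix, for each $v \notin D$, a shortest path $P_v$ of length at most $l$ from $v$ into $D$; by the girth hypothesis the union of these paths hangs off $D$ as a forest layered by distance to $D$. I would color the layer-$i$ edges of this forest from a small palette $C_i$ disjoint from the tree palette, choosing colors within each layer so that no two pendant paths that can be concatenated ever reuse a layer color. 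For a pair $(u,v)$ the candidate rainbow path is $P_u$, then the $T$-path between the two endpoints in $D$, then $P_v^{-1}$; the tree portion is automatically rainbow and color-disjoint from the pendants, so one only needs $P_u$ and $P_v$ internally rainbow and mutually color-disjoint, which the layered palette delivers using a number of colors $c_2(l)$ depending only on $l$. Combining the two estimates yields $rc(G) \le \frac{n}{\kappa} + c_1 + c_2(l)$, with the numerical constants $41$ (for $g\ge 7$, $l=2$) and $19$ (for $g \ge 5$, $l=1$) falling out of the bookkeeping.

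The main obstacle is the connectivity step of the size estimate: the naive way to connect a dominating set blows up its size by a constant factor (this is exactly what produces the $3n/\kappa$ in the trivial bound), and getting away with only an additive constant requires using the full strength of $\kappa$-connectivity to thread the connecting paths through dominated regions without inflating the core. A secondary difficulty is controlling the per-layer congestion in the extension step so that $c_2(l)$ stays constant; the acyclicity of the $l$-balls is what keeps this congestion bounded, and it is the reason the hypotheses couple the girth thresholds $g \ge 2l+2$ to the two connectivity regimes.
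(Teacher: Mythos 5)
A framing remark first: the paper does not prove this theorem at all --- it is quoted from \cite{li12} as background --- so your proposal has to be measured against the argument in that reference and against the toolkit the paper does quote, namely Lemma~\ref{lem:bas}. Your second half, the rainbow-extension estimate, is essentially fine because it \emph{is} Lemma~\ref{lem:bas}: for a bridgeless graph, $rc(G)\le rc(D^l)+l^2+2l\le (|D^l|-1)+l^2+2l$, and $\kappa\ge 3$ gives bridgelessness, so you should cite it rather than reprove it. (Your from-scratch justification does not actually work: with $O(1)$ colors per layer, pigeonhole produces two pendant vertices $u,v$ whose access paths are colored identically, and then the concatenation $P_u$, tree path, $P_v^{-1}$ is not rainbow; the real proof of the lemma is subtler.) Everything therefore reduces, as you say, to the size estimate, and that is where the proposal has a genuine gap: both sub-arguments fail. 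The greedy bound $|D_0|\le n/\kappa$ does not follow from near-disjointness of balls --- greedy covering carries a multiplicative logarithmic overhead in general, and late in the process a chosen center may net a single new vertex (e.g.\ when one undominated vertex remains, all of whose neighbors are already dominated), so ``each chosen center nets at least $\kappa$'' is false. Worse, the connection step, which you yourself flag as the main obstacle, is only asserted: Menger gives many internally disjoint paths between two centers, but neither short ones nor ones confined to already-dominated territory. If the core comes from a packing, its $p\approx n/(l\kappa)$ centers are pairwise at distance $\ge 3$, so any Steiner forest joining them needs at least $p-1$ paths, each with at least two new interior vertices; that is $\Theta(n/\kappa)$ extra vertices --- a multiplicative, not additive, overhead, and exactly the phenomenon that produces the trivial $3n/\kappa$-type bounds.

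The argument in \cite{li12} (the same mechanism underlies their $(2+\varepsilon)\frac{n}{\kappa}+\frac{23}{\varepsilon^2}$ bound) escapes this not by stitching a scattered core together but by never disconnecting in the first place: grow a single connected set $D$, repeatedly picking a vertex $u$ at distance exactly $2l+1$ from the current $D$ and attaching an entire geodesic $u\rightarrow D$ path ($2l+1$ new vertices). The radius-$l$ ball around $u$ is disjoint from everything previously dominated, and the girth hypothesis makes that ball tree-like and hence large, $|N^l[u]|\ge 1+\kappa+\kappa(\kappa-1)+\cdots$; the needed amortized inequality is $|N^l[u]|\ge (2l+1)\kappa$, which holds with $l=2$ when $g\ge 5$, $\kappa\ge 5$ (since $\kappa^2+1\ge 5\kappa$) and with $l=3$ when $g\ge 7$, $\kappa\ge 3$ (tight at $\kappa=3$: $1+3+6+12=22\ge 21$). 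Thus each attached path pays for itself at a rate of $\kappa$ newly dominated vertices per added vertex, the final $D$ is a connected $2l$-step dominating set of size at most $\frac{n}{\kappa}+O(1)$ with connectivity for free, and Lemma~\ref{lem:bas} applied at radius $2l$ yields additive constants of order $(2l)^2+2(2l)$. Note two structural differences from your plan: the domination radius is $2l$, larger than your $l=\lfloor (g-2)/2\rfloor$ (your choice leaves no slack at all --- for $g\ge 5$, $l=1$, balls have size only $\kappa+1$ while every dominating vertex must cover $\ge\kappa$ vertices), and the hypothesis pairs $(\kappa\ge 3, g\ge 7)$ and $(\kappa\ge 5, g\ge 5)$ are exactly the pairs for which the amortized inequality closes, not a coupling of girth thresholds $g\ge 2l+2$ to connectivity regimes. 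Without this amortization idea your size estimate remains unproved, so the proposal does not constitute a proof.
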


\noindent Hence it remains to show that Conjecture \ref{mainconj} holds for low girth graphs.

\section{Bounds on $rc(G)$ for Graphs with Large Diameter}
Let $G$ be a graph on $n$ vertices with vertex connectivity $\kappa$. Then $rc(G) \geq diam(G)$, and as $G$ is $\kappa$-connected, the diameter of $G$ is at most $\frac{n}{\kappa} +1$. Hence for graphs with large diameter, the lower bound is close to the conjectured upper bound. We show that, in the cases $\kappa=3$ and $\kappa=4$, $rc(G)$ is close to $\frac{n}{\kappa}$ for graphs with diameter close to $\frac{n}{\kappa}$.

%%%%%
\begin{comment}
\begin{lem}
Let $G$ be a $3-connected$ graph on $n$ vertices. Suppose that there exist a pair of vertices $u_1, u_2 \in V(G)$ such that there are $k$ internally vertex-disjoint induced $u_1 \rightarrow u_2$ paths $P_1, \ldots, P_k$, and $t$ vertices not contained in any of these paths. Then
\begin{eqnarray*}
rc(G) \leq 3t + \max_i |P_i| +1.
\end{eqnarray*}
\end{lem}
\begin{proof}
Let $X_i = V(P_i)-\{u_1,u_2\}$ be the interior vertices of each path. Let $\{Y_1, \ldots Y_s\}$ be the vertex sets of the connected components of $G - \cup_i P_i$. Let $G'$ be the graph obtained from $G$ by contracting each component $G|_{Y_j}$ to a single vertex $y_j$. It is clear that $G'$ is still 3-connected. 

We construct a rainbow coloring of $G'$ using $\max_i |P_i| + 3s + 1$ colors. 

We first partition the $P_i$ into two non-empty sets $(A,B)$. For every path $P_i \in A$, we color its edges $c_1, c_2, \ldots, c_{|P_i|}$ in order from $u_1$ to $u_2$. For every path $P_j \in B$, we color its edges $c_1, c_2, \ldots, c_{|P_j|}$ in order from $u_2$ to $u_2$. 
\end{proof}
\end{comment}
%%%%%%

\begin{thm}
Let $G$ be a $\kappa$-connected graph on $n$ vertices and let $diam(G) = \frac{n}{\kappa}-c$, where $c \geq 0$. 
\begin{itemize}
\item[(i)] If $\kappa = 3$, then $rc(G) \leq \frac{n}{3}+11c +6$.
\item[(ii)] If $\kappa = 4$, then $rc(G) \leq \frac{n}{4}+15c + 18$.
\end{itemize}
\end{thm}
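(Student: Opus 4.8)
The plan is to build a rainbow coloring around a shortest path realizing the diameter, then account for the vertices off that path. Let $P$ be a geodesic between two vertices $u,v$ with $d(u,v)=diam(G)=\frac{n}{\kappa}-c$; this path has $\frac{n}{\kappa}-c$ edges and $\frac{n}{\kappa}-c+1$ vertices, so the number of vertices \emph{not} on $P$ is $t = n - (\frac{n}{\kappa}-c+1) = \frac{(\kappa-1)n}{\kappa}+c-1$. The natural first step is to color the edges of $P$ with $\frac{n}{\kappa}-c$ distinct colors so that $P$ itself is rainbow; this handles the two endpoints and forms a connected ``spine.'' The remaining task is to attach every off-spine vertex to the spine cheaply, reusing colors wherever possible, and to ensure that any two vertices can be routed into the spine and along it. Because $\frac{n}{\kappa}-c$ already appears in the target bound, the whole game is to dominate and connect the $t$ off-spine vertices using only $O(c)$ \emph{additional} colors rather than something proportional to $t$.

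The key mechanism I would use is an $l$-step connected dominating set, invoking the machinery suggested by the definitions in Section~2. Concretely, I would take a connected dominating structure anchored on the spine $P$ and argue, using $\kappa$-connectivity (and specifically Menger-type disjoint-path arguments for $\kappa=3$ and $\kappa=4$), that off-spine vertices cluster into components each of which can be glued to the spine by short ears or chords. For each such attachment one spends a bounded number of fresh colors; the crucial counting step is to bound the \emph{number} of distinct attachment gadgets by a linear function of $c$. The intuition is that when $diam(G)$ is within $c$ of its maximum possible value $\frac{n}{\kappa}+1$, the graph is forced to be ``long and thin,'' so the vertex budget $n = \kappa \cdot diam + \kappa(c+1) - (\text{slack})$ leaves room for only $O(c)$ places where extra branching, and hence extra colors, can occur. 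This is exactly where the hypothesis on the diameter is converted into a bound on color cost.

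In more detail, the middle steps would run as follows. First, contract each connected component of $G - V(P)$ and analyze how these components attach to $P$; by $\kappa$-connectivity each component has at least $\kappa$ neighbors on the spine, which both limits how many components there can be and guarantees short reroutes. Second, for each component I would designate a small set of ``bridge'' edges into the spine and color them so that a rainbow path exists from any vertex in the component to the spine, then along the spine, then out again to any target component; reusing the spine colors is safe provided each excursion off the spine uses colors disjoint from the spine-segment it traverses. Third, I would tally the colors: $\frac{n}{\kappa}-c$ for the spine plus a constant number per bridge gadget, with the number of gadgets bounded by roughly $c$ plus a constant, yielding $\frac{n}{\kappa}+11c+6$ in the $\kappa=3$ case and $\frac{n}{\kappa}+15c+18$ in the $\kappa=4$ case. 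The differing constants reflect that $4$-connectivity forces more neighbors per component (hence larger but still bounded per-gadget cost), and the case split is essentially a separate but parallel calculation.

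The hard part will be the counting argument that bounds the number of color-consuming attachment gadgets by $O(c)$ rather than $O(t)$. Naively, each of the $t$ off-spine vertices could demand its own fresh colors, which would blow the bound up to something like $\frac{n}{\kappa}+O(t)$ and be useless. The resolution must exploit that the diameter is nearly maximal: I expect to argue that along the spine, ``most'' positions have their neighboring off-spine vertices absorbable into a single generic coloring pattern that is reused periodically, and only $O(c)$ positions exhibit the irregular branching that forces new colors. Making this rigorous requires a careful amortized or potential-function accounting of how connectivity plus the tight diameter budget constrains the distribution of off-spine vertices, and pinning down the exact constants $11c+6$ versus $15c+18$ will come down to the worst-case per-gadget color cost under $3$- versus $4$-connectivity. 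I would treat $\kappa=3$ first as the cleaner case and then adapt the argument, rather than attempting a uniform proof.
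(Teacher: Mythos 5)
There is a genuine gap, and it is exactly the step you flag as ``the hard part.'' Your construction keeps only \emph{one} geodesic as a spine, which leaves $t=\Theta(n)$ vertices off the spine, and you then need some amortization scheme that attaches all of them while spending only $O(c)$ fresh colors. No mechanism is given for this: reusing attachment colors periodically along the spine creates exactly the conflicts you would have to rule out (two off-spine vertices whose attachment gadgets share a color need a rainbow route avoiding that repetition, and an off-spine component can itself be huge, e.g.\ a long path running parallel to the spine, so it cannot be handled by a bounded ``gadget''). Your supporting claim that $\kappa$-connectivity ``limits how many components there can be'' is also false: each component having at least $\kappa$ neighbors on the spine does not bound the number of components, since components may attach to overlapping spine vertices. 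So the diameter hypothesis never actually gets converted into a bound on the number of color-consuming objects in your setup.

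The paper's key move, which your proposal is missing, is to apply Menger's theorem to the diameter pair $u_1,u_2$ and take $\kappa$ \emph{internally vertex-disjoint} $u_1$--$u_2$ paths $P_1,\dots,P_\kappa$ rather than a single geodesic. Every such path has length at least $d(u_1,u_2)=diam(G)=\frac{n}{\kappa}-c$, so the paths together cover at least $\kappa\,(diam(G)-1)+2$ vertices, leaving only about $\kappa c$ vertices outside $\bigcup_i P_i$. This makes the ``off-spine'' set have size $O(c)$ by construction, so each leftover component can simply be given its own private colors (plus a spanning forest with fresh colors), with no amortization argument needed. The second ingredient you would also need is a way to color the path system itself with roughly $\frac{n}{\kappa}$ colors rather than one color per edge (the paths have total length close to $n$): the paper does this for $\kappa=3$ by coloring the cycle $P_1\cup P_3$ cyclically with half as many colors and reusing them on $P_2$, and for $\kappa=4$ by coloring $P_1,P_4$ forward and $P_2,P_3$ in reverse with the same color set. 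Without the disjoint-paths idea, your single-spine approach would require proving a genuinely new and substantially harder amortization result, which the proposal only gestures at.
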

\begin{proof}[Proof of (i)]$~$\\
Let $u_1, u_2 \in V(G)$ be a pair of vertices of $G$ such that $d(u_1,u_2) = diam(G)$. As $G$ is $3$-connected, there exist three internally vertex disjoint $u_1-u_2$ paths, $P_1$, $P_2$ and $P_3$. Note that we may choose the $P_i$ to be induced subgraphs, and we may assume without loss of generality that $|P_1| \leq |P_2| \leq |P_3|$. Let $X_i = V(P_i)-\{u_1,u_2\}$ be the interior vertices of each path, and let $X = X_1 \cup X_3 \cup \{u_1, u_2\}$. Let $\{Y_j\}$ be the vertex sets of the connected components of $G - \cup_i P_i$. 

Let $G'$ be the graph obtained from $G$ by contracting each component $G|_{Y_j}$ to a single vertex $y_j$, and let $Y = \{y_j\}$ be the set of vertices obtained by this contraction. It is clear that $G'$ is still 3-connected. This means, in particular, that any vertex $v \in X_i$ is incident with some vertex $w \not\in X_i$, as we have chosen the $P_i$ to be induced subgraphs of $G$.  

Color the cycle $C=u_1 \rightarrow P_1 \rightarrow u_2 \rightarrow P_3 \rightarrow u_1$ using $\left\lceil\frac{1}{2}(|P_1|+|P_3|-2)\right\rceil$ colors, in cyclic order $c_1, c_2, \ldots, c_{m}, c_1, \ldots$, so that $G$ restricted to $C$ is rainbow colored. Color the edges of the path $P_2$ using $|P_2|-1$ colors, one for each edge, reusing as many of the colors $\{c_1, \ldots, c_{m}\}$ as possible. Following this, all edges in the paths $P_i$ are now colored using $m := \max\left\{\left\lceil\frac{1}{2}(|P_1|+|P_3|-2)\right\rceil, |P_2|-1\right\}$ colors.

We now color the edges in $E(G') - \cup_i E(P_i)$. For each vertex of $G'$ not in $X$, we let $l(v)$ be the minimum length of a path from $v$ to $X$, with the constraint that the path does not contain adjacent edges in $X_2$ and uses the fewest edges of $P_2$ possible for any $v\rightarrow X$ path satisfying this constraint. As $G'$ is 3-connected, $l(v)$ is well-defined for every vertex $v \in X_2 \cup Y$.

Let $p(v)$ be a fixed vertex adjacent to $v$ such that $l(p(v)) < l(v)$. If $v \in X$, we take $l(v) = 0$. Hence for each $v$, the vertices $v, p(v), p(p(v)), \ldots, p^{(l(v))}(v)$ form a minimal $v \rightarrow X$ path that uses as few edges of $P_2$ as possible. Let $E_p$ be the set of edges of the form $\{v, p(v)\}$ for some $v \in V(G)-X$.  

Now color each edge $e \not \in \cup_i P_i$ and recolor some of the edges $e \in P_2$ by the color $c_e$, where $c_e$ is given by
\vspace{-5pt}
\begin{eqnarray*}
c_e = \left\{
\begin{array}{cc}
c_{m + l(v)}	&	\text{ if } e = \{v, p(v)\} \text{ and } v \in X_2 	\\
c_{v}				&	\text{ if } e = \{v, p(v)\} \text{ and } v \in Y 	\\
d				&	\text{ if } e = \{u,v\}\not\in E_p \text{ and } u \in Y, v \in X \cup \{u_1, u_2\}	\\
e				&	\text{ if } e = \{u,v\}\not\in E_p \text{ and } u \not\in X_2, v \in X_2.
\end{array}
\right.
\end{eqnarray*}

Then each of the edges of $G'$ are colored using one of $m + \max_{v} l(v) + |Y|  + 2$ colors. 

We may bound $\max_{v} l(v)$ as follows. Each $v \in Y$ is incident only to vertices in $X \cup X_2$. Moreover, if $v \in X_2$ and $l(v) > 1$ then $v$ is incident to a vertex in $Y$. Finally, there is no pair of adjacent edges in any path $v \rightarrow p(v) \rightarrow p(p(v)) \cdots p^{(l(v))}(v)$ that are both in $P_2$. Hence, if $D_l$ is the set of vertices with domination distance $l$, then every three sets $D_l, D_{l+1}, D_{l+2}$ contains at least one member of $Y$. Hence $\max_v l(v) \leq 3|Y|$. 

Finally, we show that such a coloring, applied after first coloring the edges of $P_1 \cup P_3$, is a rainbow coloring of $G'$. We first note the following useful facts.

\begin{itemize}

\item \underline{If $u, v \in X$ then there is a rainbow $u \rightarrow v$ path using only colors in $\{c_1, \ldots, c_{m}\}$.}\\
It is easy to verify that the cycle $G'|_{P_1 \cup P_3}$ is rainbow colored. Hence there is a rainbow $u\rightarrow v$ path contained in $P_1 \cup P_3$.

\item \underline{If $u, v \in X_2$ then there is a rainbow $u \rightarrow v$ path using only colors in $\{c_1, \ldots, c_{3|Y|}\}$.}\\
It is easy to verify that $P_2$ is a rainbow path, and hence any subpath is also a rainbow path. Moreover, as $\max_v l(v) \leq 3|Y|$, all the colors used are in the set $\{c_1, \ldots, c_{3|Y|}\}$.

\item \underline{If $u \not\in X$ then there is a rainbow $u \rightarrow X$ path using only colors in $\{c_v\}$.}\\
This is also clear, as the $u \rightarrow X$ path $u, p(u), \ldots, p^{(l(u))}(u)$ is such a path. 

\item \underline{If $u \not\in X$ then there is a rainbow $u \rightarrow X_2 \cup X$ path using only colors in $\{d, e\}$.}\\
This follows from 3-connectivity of $G'$. For if $u$ is a vertex of $G$, either $u \in X_2$ and the $u \rightarrow X_2$ path exists trivially, or $u \in Y$ and is adjacent only to vertices in $X$ and $X_2$. Since $G'$ is 3-connected, $u$ is incident with at least one edge not in $E_p$, which forms the required $u \rightarrow X_2 \cup X$ path.
\end{itemize}

Let $u,v$ be a pair of vertices in $G'$. We show that there is a rainbow $u\rightarrow v$ path. 

\begin{itemize}

\item \underline{Case 1: Both $u$ and $v$ are in $X$, or both $u$ and $v$ are in $X_2$.} These cases have been shown above.

\item \underline{Case 2: Exactly one of $u$ and $v$ is in $X$.}\\
We may assume without loss of generality that $v \in X$, $u \not\in X$. Then there exists some $w \in X$ such that there is a $u \rightarrow w$ path using only colors in $\{c_v\}$, and as $w \in X$ there is a $w \rightarrow v$ path using colors in $\{c_1, \ldots, c_{m}\}$, so concatenating these gives the required rainbow $u\rightarrow v$ path.

\item \underline{Case 3: Both $u$ and $v$ are not in $X$.}\\
If either $u$ or $v$ is adjacent, via an edge not in $E_p$, to a vertex in $X$, then we may choose the following rainbow $u\rightarrow v$ path. Without loss of generality let $v$ be adjacent to $v' \neq p(v)$ in $X$, so that there is a $v'\rightarrow v$ path using only the colors in $\{d,e\}$. As $u \not\in X$, there is also a $u \rightarrow X$ path using only colors in $\{c_v\}$ ending at some vertex $u'$. Finally, there is a $u' \rightarrow v'$ path using only colors in $\{c_1, \ldots, c_{m}\}$. Concatenating these three paths, we obtain a rainbow $u \rightarrow u' \rightarrow v' \rightarrow v$ path.

Otherwise the 1-step neighborhoods of $u$ and $v$ contain only vertices in $X_2 \cup Y$. Let $u' = p(u)$ and $v'$ be the end of a rainbow $v \rightarrow X \cup X_2$ path using only colors in $\{d,e\}$, where we may choose both $u'$ and $v'$ to be in $X_2$. Then we may concatenate the edge $u \rightarrow u'$, colored by the color $c_u$, with a $u' \rightarrow v'$ path using only colors in $\{c_1, \ldots, c_{3|Y|}\}$ and the $v'\rightarrow v$ path using only colors in $\{d,e\}$ to obtain a $u\rightarrow v$ rainbow path.
\end{itemize}

Hence $rc(G') \leq m + 4|Y| + 2$.

If we then take a spanning forest of $G - \cup_i P_i$ and color each of its edges a different color, we obtain a rainbow coloring of $G$ using at most $m+ 4|Y| + 2 + \sum_i(|Y_i|-1) \leq m+ 3|Y|+\sum_i |Y_i|+2$ colors. A bit of arithmetic yields
\begin{eqnarray*}
\left\lceil\frac{1}{2}(|P_1|+|P_3|-2)\right\rceil + \left\lfloor\frac{1}{2}(|P_1|+|P_3|-2)\right\rfloor + (|P_2|-1) - 1&=& n - \sum_i |Y_i| 
\end{eqnarray*}
\vspace{-15pt}
\begin{eqnarray*}
m + 11diam(G) - 4 &\leq& 4n - 4\sum_i |Y_i| \\
m + 4\sum_i |Y_i| &\leq& \frac{n}{3} +4 + 11c \\
rc(G) ~\leq~ m+ 3|Y| +\sum_i |Y_i|+2 &\leq& \frac{n}{3} + 11c + 6.
\end{eqnarray*}
Hence $rc(G) \leq \frac{n}{3} + 11c + 6$, as required.
\end{proof}

\begin{proof}[Proof of (ii)]
As before, take $u_1, u_2 \in V(G)$ a pair of vertices with distance $d(u_1, u_2) = diam(G)$, and four internally vertex disjoint $u_1\rightarrow u_2$ paths $P_1, P_2, P_3, P_4$. Similarly define $X_i$, define $\{Y_i\}$ as the vertex sets of the components of $G - \cup_i P_i$, let the graph $G'$ be obtained from $G$ by contracting each set $Y_i$ to a single point, and define $Y$ as before. 

Color the paths $P_1$ and $P_4$ in order from $u_1$ to $u_2$ using the colors $\{c_1, \ldots, c_{|P_i|-1}\}$ and color the paths $P_2$, $P_3$ in reverse order, from $u_2$ to $u_1$. 

Let $Z_i$ be the set of vertices in $Y$ and only adjacent to members of $X_i$, and for each $v \in X_i \cup Z_I$ let $l_i(v)$ to be the minimum length of a path from $v$ to $X_1$, using as few of the edges in $P_i$ as possible. Let $E_{p_i}$ be the set of all edges of the form $\{v, l_i(v)\}$. 

Now color each edge $e \not\in \cup_i P_i$ and recolor some of the edges $e \in \cup_i P_i$ by the color $c_e$, where $c_e$ is given by
\vspace{-5pt}
\begin{eqnarray*}
c_e = \left\{
\begin{array}{cc}
c_{i,l_i(v) }		&	\text{ if } e = \{v, l_i(v)\} \text{ and } v \in X_i 	\\
c_{i, v}				&	\text{ if } e = \{v, l_i(v)\} \text{ and } v \in Z_i		\\
d_i				&	\text{ if } e = \{u,v\}\not\in E_{p_i}  \text{ and } u \in Y, v \in X_i	\\
e_{i,j}			&	\text{ if } e = \{u,v\}\not\in E_{p_i} \text{ and } u \in X_i, v \in X_j.
\end{array}	
\right.
\end{eqnarray*}

As in the proof of (i), for any pair of vertices in $u \in X_j, v \in X_k$, unless $j + k = 5$, there is clearly a $u\rightarrow v$ rainbow path along the $P_i$. For any $u \in Z_i$ there is a rainbow $u \rightarrow \cup_{j \neq i} X_j$ path using only colors in $\{c_{i,v}\}$ and a rainbow $u \rightarrow X_i$ edge using the color $\{d_i\}$. Following a similar case analysis as in (i), concatenating rainbow paths within $X_i \cup Z_i$ with rainbow paths outside of $X_i \cup Z_i$, it can be shown that the assigned colors form a rainbow coloring of $G'$.

Again as in the proof of (i), it can also be seen that this rainbow coloring uses at most $\max_i (|P_i|-1) + \sum_i \max_v l_i(v) + \sum_i |Z_i| + 4 + {4 \choose 2}$ colors, where $\max_v l_i(v) \leq 3|Z_i|$ for each $i$. Hence
\begin{eqnarray*}
rc(G') &\leq& \max_i (|P_i|-1) + 4\sum_i |Z_i| + 10~~~~~
\end{eqnarray*}
\vspace{-17pt}
\begin{eqnarray*}
rc(G) &\leq& \left(\max_i (|P_i|-1) + 4\sum_i |Y_i| \right)+ 10\\
&\leq& \left(4n - 15(diam(G)) +8 \right) + 10 \\
&=& 4n - 15\left(\frac{n}{4}-c\right) +18 \\
&=& \frac{n}{4} + 15c + 18.
\end{eqnarray*}
This completes the proof.
\end{proof}

\section{Bounds on $rc(G)$ for Maximal Planar Graphs}
In this section, we prove that if $G$ is a maximal planar graph, then $rc(G) \leq \frac{n}{\kappa} + 36$. We remark that the proof can be quite simply extended to show that $rc(G) \leq \frac{n}{\kappa} + C_f$ for any planar graph graph with maximum face size $f$ and constants $C_f$ which grow at least linearly in $f$.

\begin{thm}\label{thm:max}
Let $G$ be a $\kappa$-connected maximal planar graph on $n$ vertices.
\begin{itemize}
\item[(i)] If $\kappa = 3$, then $rc(G) \leq \frac{n}{3} + 16$.
\item[(ii)] If $\kappa = 4$, then $rc(G) \leq \frac{n}{3} + 25$.
\item[(iii)] If $\kappa = 5$, then $rc(G) \leq \frac{n}{3} + 36$.
\end{itemize}
\end{thm}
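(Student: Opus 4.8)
The plan is to exploit the fact that a maximal planar graph on $n \ge 4$ vertices is a plane triangulation, and hence admits a small dominating set: by the classical theorem of Matheson and Tarjan, every plane triangulation on $n$ vertices has a dominating set $S$ with $|S| \le n/3$. This is the source of the main term $\frac{n}{3}$, and it explains why the leading coefficient is the \emph{same} for all three values of $\kappa$; the connectivity will enter only through the additive constant. Throughout I would use the triangulation structure in two ways: the neighbours of any vertex $v$ occur in cyclic order, so the closed neighbourhood of $v$ induces a wheel; and since connectivity is at most the minimum degree while a triangulation has $\delta(G) \le 5$ by Euler's formula, we have $3 \le \kappa \le \delta(G) \le 5$, so that $\kappa \in \{3,4,5\}$ and each vertex has several incident edges to play with.

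First I would turn the dominating set $S$ into a connected dominating structure. I would select $S$ (using the partition underlying the Matheson--Tarjan bound) so that neighbouring ``cells'' of the domination can be stitched together by short paths, obtaining a connected subgraph $H \supseteq S$ that still dominates $G$ and with $|V(H)| = \frac{n}{3} + O(1)$. I would then colour each edge of a spanning tree $T$ of $H$ with its own distinct colour. This uses $|V(H)| - 1 \le \frac{n}{3} + O(1)$ colours and rainbow-connects every pair of vertices of $H$, since any path in a tree is automatically rainbow.

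Next I would attach the vertices outside $H$ using only a bounded palette of new colours. For $u,v \notin H$ the intended rainbow path is $u \to d_u \to (\text{tree path in } T) \to d_v \to v$, where $d_u, d_v \in H$ are chosen dominators; as the middle segment already uses distinct tree colours, I only need the two pendant edges $\{u,d_u\}$ and $\{v,d_v\}$ to carry distinct new colours that avoid the tree colours. Here the wheel structure helps: because $G$ is a triangulation and $\deg(u) \ge \kappa$, each $u \notin H$ sees its neighbours in cyclic order and typically has several edges into $H$, so one can colour the pendant edges with a constant palette in such a way that every outside vertex has an incident edge of each colour, resolving all clashes. Combining the $\frac{n}{3}+O(1)$ tree colours with the constantly many pendant colours gives $rc(G) \le \frac{n}{3} + O(1)$, and a careful accounting of the connector and pendant colours, carried out separately for $\kappa = 3,4,5$, is what should produce the three constants $16$, $25$ and $36$. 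For any local pieces that end up with bounded radius I would, if convenient, invoke the radius bound $rc(G) \le r(r+2)$ of Basavaraju et al. \cite{bas} rather than re-colouring by hand.

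The hard part will be the second step: controlling the \emph{connected} domination cost. Matheson--Tarjan supplies a dominating set of size $n/3$, but nothing guarantees it is connected, and a naive stitching could introduce $\Theta(n)$ connector vertices and destroy the $\frac{n}{3}+O(1)$ budget. The crux is therefore to show that in a $\kappa$-connected triangulation the dominating set can be connected, and the outside vertices attached, at an \emph{additive} cost of only $(\kappa+1)^2$: planarity must be used to limit how many connector and pendant colours are forced, while $\kappa$-connectivity must be used to guarantee each outside vertex enough independent routes into $H$. The dependence of the constant on $\kappa$ reflects the extra care needed to treat the low-degree vertices, which is precisely why the cases $\kappa = 3, 4, 5$ are handled individually.
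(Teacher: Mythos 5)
There is a genuine gap, and it is exactly the one you flag yourself: your entire budget rests on the claim that the Matheson--Tarjan dominating set can be connected, and the outside vertices attached, at an additive cost of only $(\kappa+1)^2$, and nothing in your sketch proves or could prove this. The $n/3$ dominating set may consist of $\Theta(n)$ pairwise far-apart components; stitching each adjacent pair of cells costs up to two connector vertices, so naive stitching gives $|V(H)| = n/3 + \Theta(n)$, destroying the main term. No property of planarity or $\kappa$-connectivity is identified that forces the stitching cost down to a constant, and the known routes to \emph{connected} domination in min-degree-bounded graphs (maximum-leaf spanning trees, Kleitman--West-type bounds) give only $cn$ with $c$ well above $1/3$. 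A proposal whose central claim is announced as ``the crux'' and left open is not a proof. There is a second, smaller gap in the attachment step: domination guarantees each outside vertex only \emph{one} neighbour $d_u$ in $H$, so with a constant pendant palette two outside vertices whose unique pendant edges share a colour have no rainbow path of your prescribed form; ``typically has several edges into $H$'' is not an argument. This second gap is repairable --- Lemma \ref{lem:bas} with $l=1$ gives $rc(G) \leq rc(H) + 3$ for any connected dominating set in a bridgeless graph, absorbing all pendant-colour clashes --- but the first is not.

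For contrast, the paper never invokes a domination-number theorem at all. It fixes a face $F$ and takes the BFS layers $N_k = N^k(F)$; Lemma \ref{lem:2conn_neighbs} shows each layer induces a $2$-connected (indeed Hamiltonian-cycle-containing) subgraph because $G$ is a triangulation. The connected $\kappa$-step dominating set $D$ is then assembled for free: one path $P$ meeting every layer supplies connectivity, and an averaging argument over residue classes modulo $\kappa$ selects roughly every $\kappa$-th ``large'' layer, each rainbow-coloured with $\left\lceil |N_k|/2 \right\rceil$ colours, yielding $rc(D) \leq \frac{n}{\kappa} + 1$. Applying Lemma \ref{lem:bas} with $l = \kappa$ costs $\kappa^2 + 2\kappa$, producing exactly the constants $(\kappa+1)^2 \in \{16, 25, 36\}$ you conjectured by pattern-matching --- but via the $l$-step dominating lemma of Basavaraju et al.\ \cite{bas}, not via pendant-edge bookkeeping. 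Note also that the paper's argument actually gives $\frac{n}{\kappa} + (\kappa+1)^2$, which for $\kappa = 4, 5$ is strictly stronger than any $\frac{n}{3} + C$ bound; a Matheson--Tarjan-based approach is structurally incapable of producing the $\frac{n}{4}$ and $\frac{n}{5}$ main terms claimed in the abstract, which is a sign that $1$-step domination with an $n/3$-size set is the wrong primitive here, and $\kappa$-step domination with an $n/\kappa$-size set is the right one.
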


We note that any planar graph has a vertex of degree at most $5$, and therefore has vertex connectivity at most $5$. Thus Theorem \ref{thm:max} proves Conjecture \ref{mainconj} for $C=36$ and all maximal planar graphs.

In proving the theorem, we make use of the following lemmas.

%LEMMAS
\begin{lem}[Basavaraju et al. \cite{bas}]\label{lem:bas}
Let $G$ be a bridgeless graph and let $D^l$ be a connected $l$-step dominating set of $G$. Then
\begin{eqnarray*}
rc(G) \leq rc(D^l)+l^2+2l.
\end{eqnarray*}
\end{lem}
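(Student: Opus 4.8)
The plan is to prove the bound by peeling off the breadth-first layers around $D^l$ one at a time, reducing the domination radius by one at each step and controlling how much each reduction can inflate the rainbow connection number of the relevant induced subgraph. The key observation is that the per-step costs will telescope to exactly $l^2+2l$; as a sanity check, when $D^l$ is a single central vertex (so $rc(D^l)=0$ and $l=rad(G)$) this recovers the radius bound $rc(G)\le rad(G)\,(rad(G)+2)$, which is the special case this lemma generalises.

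Concretely, for $0\le i\le l$ set $S_i:=N^{l-i}[D^l]$, so that $S_l=D^l$, $S_0=V(G)$, and $S_{i-1}=N^1[S_i]$. Each $S_i$ is a connected $i$-step dominating set: it is connected because every vertex it adds is adjacent to the connected subgraph $G|_{S_{i+1}}$ (wait—rather, $S_{i-1}$ is connected because each of its new vertices is adjacent to the connected set $S_i$), and it is $i$-step dominating because a vertex at distance $d\le l$ from $D^l$ is at distance at most $\max(d-(l-i),0)\le i$ from $S_i$. Thus $G|_{S_0}=G$ and $G|_{S_l}=G|_{D^l}$, and we have a nested chain along which to induct.

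The heart of the argument is a per-layer sub-lemma: if $D$ is a connected $k$-step dominating set of a bridgeless graph $G$ with $k\ge 1$, then $N^1[D]$ is a connected $(k-1)$-step dominating set and $rc(G|_{N^1[D]})\le rc(G|_{D})+(2k+1)$. Granting this, I would start from a rainbow coloring of $G|_{D^l}$ using $rc(D^l)$ colors and apply the sub-lemma successively to $S_l,S_{l-1},\dots,S_1$, i.e. with $k=l,l-1,\dots,1$. Since reducing $S_i$ to $S_{i-1}$ costs $2i+1$, this yields $rc(G)=rc(G|_{S_0})\le rc(D^l)+\sum_{i=1}^{l}(2i+1)=rc(D^l)+l(l+1)+l=rc(D^l)+l^2+2l$, as required.

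For the sub-lemma I would fix a rainbow coloring of $G|_{D}$, let $W=N^1(D)$ be the newly added layer (every vertex of which is adjacent to $D$), and reserve $2k+1$ fresh colors. For each $w\in W$ I would choose a foot $f(w)\in D$ and color the pendant edge $\{w,f(w)\}$; two new vertices $w_1,w_2$ are then joined by concatenating $w_1\rightarrow f(w_1)$, a rainbow $f(w_1)\rightarrow f(w_2)$ path inside the already-colored $G|_{D}$, and $f(w_2)\rightarrow w_2$, which is rainbow provided the two pendant edges carry distinct fresh colors. The obstacle is precisely the pairs that a naive coloring would force to share a pendant color, and this is where the bridgeless hypothesis is indispensable: because $G$ is bridgeless, every $w$ lies on a cycle and hence admits a second, internally rerouted rainbow escape into the core, so that each new vertex can be equipped with two differently-colored connections to $D$ and every pair can be resolved. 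I expect the quantitative core—showing that $2k+1$ fresh colors genuinely suffice to realise these alternate escapes (whose detours may wind through $W$ before reaching $D$) and checking that the same coloring also settles the new–old and old–old pairs—to be the main technical difficulty; the telescoping identity and the domination bookkeeping above are routine.
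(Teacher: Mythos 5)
Note first that the paper does not prove this lemma at all --- it is quoted from Basavaraju et al.\ \cite{bas} --- so your attempt can only be measured against that cited proof. Your outer skeleton (peel one layer per step at a cost of $2k+1$ colors, telescope $\sum_{k=1}^{l}(2k+1)=l^2+2l$) is exactly the right shape and matches the strategy of \cite{bas}, and your bookkeeping about connectivity and domination of the sets $S_i$ is correct. The fatal problem is that your per-layer sub-lemma is \emph{false}: with $S_{i-1}=N^1[S_i]$ the inequality $rc(G|_{N^1[D]})\le rc(G|_D)+2k+1$ fails badly. Concretely, take the gear graph: a center $u$, vertices $x_1,\dots,x_N$ all adjacent to $u$, and vertices $y_1,\dots,y_N$ with $y_i$ adjacent to $x_i$ and $x_{i+1}$ (indices mod $N$). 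This $G$ is bridgeless (every edge lies on a $4$-cycle $u,x_i,y_i,x_{i+1},u$), and $D=\{u\}$ is a connected $2$-step dominating set with $rc(G|_D)=0$. But $N^1[D]=\{u,x_1,\dots,x_N\}$ induces the star $K_{1,N}$, and $rc(K_{1,N})=N$, since the unique path between two leaves uses both their pendant edges, forcing all $N$ edges to get distinct colors. So the very first step of your induction claims $N\le 5$. The reason your "bridgeless gives a second escape route" rescue cannot work is that the cycles guaranteed by bridgelessness of $G$ pass through the $y_i$, i.e.\ through vertices \emph{outside} $N^1[D]$; the induced subgraph $G|_{N^1[D]}$ is not itself bridgeless (here it is a tree), so the rerouted escapes you need simply do not exist in the graph whose rainbow connection number you must bound. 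Your $k=1$ sanity check hides this, because only when $k=1$ does $N^1[D]$ equal all of $V(G)$ and the ambient bridgelessness become usable.

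The repair --- and this is what Basavaraju et al.\ actually do --- is to abandon the rigid chain $S_{i-1}=N^1[S_i]$ and instead enlarge the core by \emph{ears}: paths with both endpoints in $D^k$ and all internal vertices outside it, chosen so that every vertex of $N^1(D^k)$ is internal to some ear. Bridgelessness guarantees such ears exist, and combined with $k$-step domination their length can be bounded in terms of $k$; crucially, their internal vertices may lie at distance up to $k$ from $D^k$, i.e.\ the enlarged set deliberately reaches \emph{beyond} the $1$-step neighborhood precisely so that the alternate escape routes are included in the induced subgraph being colored. All ears added in one round then share a single palette of $2k+1$ colors, oriented so that any new vertex can descend into the rainbow-connected core along either half of its ear, and any two new vertices on different ears use color-disjoint halves. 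In the gear example this produces $\{u\}\cup\{x_i,y_i,x_{i+1}: i \text{ odd}\}$ rather than the star, and everything goes through. So the statement is true and your telescoping frame is salvageable, but only after replacing the full-neighborhood sub-lemma (which is not merely unproved but false) by the ear-based enlargement.
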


\begin{lem}\label{lem:2conn_neighbs}
Let $G$ be a planar embedding of a maximal planar graph. Then for any cycle $C$, $N^1(C)$, the one-step neighborhood of $C$, is comprised of two 2-connected components, one inside $C$ and one outside $C$.
\end{lem}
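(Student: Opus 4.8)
The plan is to exploit the two hypotheses separately: planarity to separate the two sides of $C$, and maximality (every face a triangle) to control the structure on each side. Fix the planar embedding, so that $C$ bounds an inside region and an outside region. First I would invoke the Jordan curve theorem: since $C$ is a closed curve in the plane, any edge joining a vertex drawn strictly inside $C$ to a vertex drawn strictly outside would have to cross $C$, which is impossible in a planar embedding. Hence the vertices of $N^1(C)$ split into an inside set $N_{in}$ and an outside set $N_{out}$ with no edge of $G$ between them, so they lie in distinct components of $G|_{N^1(C)}$. On the sphere the inside and outside of $C$ play symmetric roles, so it suffices to analyze one side, say the inside.

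Next I would use the triangulation property to describe $N_{in}$ explicitly. Write $C = v_0 v_1 \cdots v_{k-1}$. For each edge $v_i v_{i+1}$ of $C$ the unique face lying just inside $C$ on that edge is a triangle $v_i v_{i+1} x_i$, whose apex $x_i$ is a neighbour of $C$. More generally, around a fixed vertex $v_i$ the edges pointing into the inside region occur, between the two cycle edges $v_{i-1}v_i$ and $v_i v_{i+1}$, in a definite rotational order; since consecutive such edges bound a triangular face, the inside-neighbours of $v_i$ form a path (a ``fan'') whose first vertex is adjacent to $v_{i-1}$ and whose last vertex is adjacent to $v_{i+1}$. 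The apex $x_i$ is simultaneously the last fan vertex at $v_i$ and the first fan vertex at $v_{i+1}$, so consecutive fans overlap in a vertex. Concatenating the fans as $i$ runs around $C$ therefore produces a single closed walk $W$ meeting every vertex of $N_{in}$, which already shows that $N_{in}$ is connected.

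To finish I would argue that $W$ is in fact a simple cycle, so that $N_{in}$ contains a spanning cycle and is hence 2-connected; applying the same argument to the outside then yields the second 2-connected component and completes the proof.

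I expect this last step to be the main obstacle. The walk $W$ fails to be simple exactly when some inside vertex $w$ lies in two different fans, i.e.\ when $w$ is adjacent to two non-consecutive arcs of $C$; such a $w$ is a cut vertex of $N_{in}$ and destroys 2-connectivity. To rule this configuration out I would combine the triangulation structure with the fact that a maximal planar graph on at least four vertices is 3-connected, using connectivity to forbid an inside vertex from ``reaching across'' $C$ in this way. This is the delicate point: one must pin down precisely why the pinching configuration cannot occur (I would expect to need that $C$ is chordless, and possibly that it is isometric, since an inside vertex joined to two far-apart vertices of $C$ is exactly what produces a pinch), and then verify that under this hypothesis the stitched fan walk $W$ is simple. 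Establishing that the inner boundary closes up into a genuine cycle, rather than merely a connected closed walk, is where the real work lies.
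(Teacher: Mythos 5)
Your proposal follows exactly the route of the paper's own proof: separate the two sides of $C$ via the Jordan curve theorem, observe that the neighbours of each $v_i \in C$ on one side form a fan whose consecutive members are adjacent (because the faces at $v_i$ are triangles), note that the fans of $v_i$ and $v_{i+1}$ share an endpoint (because the face on the edge $v_iv_{i+1}$ is a triangle), and stitch the fans into a closed walk through all of the outer part of $N^1(C)$. The paper finishes by simply asserting that this stitched walk makes the induced subgraph Hamiltonian, hence 2-connected. You stopped one step short of that and flagged the missing step --- that the walk is a \emph{simple} cycle, i.e.\ that no vertex lies in two non-consecutive fans --- as ``where the real work lies.'' Your caution is vindicated: that step is not only absent from your proposal, it is silently skipped in the paper, and it cannot be repaired, because the lemma as stated is false.

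The pinching configuration you describe really does occur in maximal planar graphs. Take a hexagon $C = v_1v_2v_3v_4v_5v_6$; triangulate its inside with a single vertex $a$ adjacent to all six $v_i$; outside $C$, add a vertex $w$ adjacent to $v_1$ and $v_4$, a vertex $x$ adjacent to $v_1, v_2, v_3, v_4, w$ placed in the region bounded by $v_1v_2v_3v_4w$, and a vertex $y$ adjacent to $v_4, v_5, v_6, v_1, w$ placed in the other region. This graph has $10$ vertices and $24 = 3\cdot 10 - 6$ edges, every face a triangle, so it is maximal planar (and, like every maximal planar graph, 3-connected). The outer part of $N^1(C)$ is $\{x,w,y\}$, whose induced subgraph is the path $x$--$w$--$y$: the vertex $w$ lies in the two non-consecutive fans at $v_1$ and at $v_4$, the stitched walk is $y,w,x,\ldots,x,w,y,\ldots$ and repeats $w$, and deleting $w$ separates $x$ from $y$. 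So this component is connected but not 2-connected, and the ``Hamiltonian'' claim in the paper's proof fails. Note that $C$ here is chordless, so chordlessness alone will not rescue the statement; your other suggestion, isometry, is the right direction, since the failure is driven by the shortcut $d_G(v_1,v_4)=2<3=d_C(v_1,v_4)$, but you would still have to prove a corrected lemma under that hypothesis and check that it suffices where the lemma is invoked (the proof of Theorem \ref{thm:max} uses 2-connectivity of the sets $N_k$). As it stands, both your proposal and the paper's proof are incomplete at the same point, and the paper's version of the claim admits the counterexample above.
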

\begin{proof}
By symmetry, it suffices to show that the component outside of $C$ is 2-connected.

Let $v_i$ be a vertex of $C$ and let $v_{i,1}, v_{i,2}, \ldots, v_{i,m_i}$ be the vertices outside of $C$ adjacent to $v$, in counterclockwise order. Then as $G$ is maximal planar, $v_{i,j}$ and $v_{i,j+1}$ are adjacent for all $i, 1 \leq j < m_i$. 

Let $v_i, v_{i+1}$ be two adjacent vertices in $C$, in counterclockwise order. Then as $G$ is maximal planar, $v_{i,m_i} = v_{i+1,1}$. 

Hence the subgraph induced by those vertices of $N^1(C)$ outside of $C$ is Hamiltonian, and hence 2-connected.
\end{proof}

%PROOF OF THEOREM
\begin{proof}[Proof of Theorem \ref{thm:max}]$~$\\
\noindent Let $F$ be the vertex set of an arbitrary face of $G$, and let $N_k = N^k(F)$ be the $k$-step open neighborhood of $F$. Let $t$ be maximal such that $N_t$ is non-empty. 

%%%%%%%
\begin{comment}
We define a series of vertex sets $N_k$ as follows. Let $N_0 F \subset V(G)$ be the vertex set of an arbitrary face of $G$. Given $N_k$, let $N'_{k+1}$ be the 1-step neighborhood of $N_k$ on the outside of the boundary of $N_k$. If $|N'_{k+1}|$ is even, let $N_{k+1}=N'_{k+1}$, and if $|N'_{k+1}|$ is odd, choose some vertex $v$ outside $N'_{k+1}$ that is incident with two adjacent vertices of $N'_{k+1}$, and let $N_{k+1} = N'_{k+1} \cup \{v\}$. Such a vertex exists as $G$ is a maximal planar graph. 

We claim first that the $N_k$ may be constructed so that, for each $0 \leq s < t$, there exists a path from $F$ to the outer boundary of $N_k$ of length $s$ containing exactly one vertex from each $N_k$. It will become clear later why this is important.

The case $s=0$ is trivial, so suppose $N_k$ has been constructed for $0 \leq k \leq s$. If $|N^1(N_s)| = |N'_{s+1}|$ is even, then we may take $N_{s+1} = N'_{s+1}$. Otherwise, we must add a vertex.

Suppose the graph induced by $N^2(N_s)$ is not a single cycle. Then, as $G$ is maximal planar, the area between the inner boundary and the outer boundary of $N^2(N_k)}$ is triangulated. Hence there exists a 

Suppose that the process stops at $N_t \neq \emptyset$. By induction, each $N'_k$ is 2-connected and has a cycle for its boundary, and so by construction $N_k$ is also 2-connected and has a cycle for its boundary. Moreover, as $G$ is $\kappa$-connected, $|N_k| \geq \kappa$ is even for $1 \leq l < t$, and $|N_0| = |F| = 3$.  
\end{comment}
%%%%%%%%%%

We construct a connected $\kappa$-step dominating set $D$ of $G$ and show that $rc(D) \leq \frac{n}{\kappa} + 1$. Then by Lemma \ref{lem:bas}, $rc(G) \leq rc(D) + \kappa^2+2\kappa = \frac{n}{\kappa} + 1 + \kappa^2 + 2\kappa$ and the theorem is proved.

Let $A_1 = \{k : |N_k| \leq 2\kappa-1\}$, $A_2 =\{k: |N_k| = 2\kappa\}$ and $A_3 =\{k: |N_k| \geq 2\kappa+1\}$, with cardinalities $n_i=|A_i|$. We label the elements of $A_2 \cup A_3$ in ascending order, $A_2 \cup A_3 = \{k_1, \ldots, k_{|A_2 \cup A_3|}$. We choose a path $P$ of length $t$ containing exactly one vertex from each of the $N_k$. We also choose $A = \{k_i: i \equiv a ~(mod~ \kappa)\}$ to be the congruence class, modulo $\kappa$, such that $\sum_{k \in A} \left\lceil \frac{|N_k|}{2} \right\rceil$ is minimized. Let $D = F \cup \{N_k : k \in A\}  \cup V(P)$. 

We show first that $D$ is a connected $l$-step dominating set of $G$. Connectivity is clear, as $P$ is connected, $N_k$ is connected for each $k$ and $P \cap N_k$ is non-empty for each $k$. To show that $D$ is an $l$-step dominating set of $G$, consider some $v \in N_k$. Let $k' \leq k$ be maximal such that either $|N_{k'}| \leq 2 l_1+1$, or $k' \in A$. Let $v' \in N_{k'}$ be chosen such that $d(v,v') = k-k'$. Then $d(v, D) \leq d(v,v') + d(v',D)$. Note that $k-k' < l_2$. 

If $k'$ is such that $|N_{k'}| \leq 2 l_1+1$, then there is a vertex $v_{k'} \in N_{k'} \cap D$, so $d(v', D) \leq d(v', v_{k'}) \leq l_1$, as $N_{k'}$ is 2-connected. Hence $d(v, D) \leq l_2-1+l_1 = l$. If $k'$ is such that $k'=k_i$ for some $i$, then $v' \in D$ so $d(v, D) = d(v,v') < l$. Hence $D$ is a connected $l$-step dominating set of $G$.

We now show that $rc(D) \leq \frac{n}{\kappa}$ by constructing a rainbow coloring of $D$. Rainbow color $P$ using $|P|$ colors, and as each $N_k$ is 2-connected, rainbow color each $N_k$ using at most $\left\lceil \frac{|N_k|}{2} \right\rceil$ colors. As new colors are used for each $N_k$, this clearly gives a rainbow coloring of $D$.

Moreover, the rainbow coloring uses at most $|P| + \sum_{k \in A} \left\lceil \frac{|N_k|}{2} \right\rceil$ colors. Hence
\vspace{-5pt}
\begin{eqnarray*}
rc(D) &\leq& (n_1+n_2+n_3-1) + \frac{1}{\kappa}\sum_{k \in A_2 \cup A_3} \left\lceil \frac{|N_k|}{2} \right\rceil ~~~~~~~~~~~~~\text{   (by choice of $A$)}\\
&\leq& (n_1+n_2+n_3-1) + \frac{1}{2\kappa}\sum_{k \in A_2} |N_k| + \frac{1}{2\kappa}\sum_{k \in A_3} \left(|N_k| + 1\right)
\end{eqnarray*}
\begin{equation}\label{eqn:rc}
rc(D) ~\leq~ n_1+\frac{2\kappa n_2+ (2\kappa + 1)n_3}{2\kappa}-1 + \frac{1}{2\kappa}\sum_{k \in A_2 \cup A_3} |N_k|. ~~~~~~~~~~~~~~~~~~~~~~~~~
\end{equation}
To simplify this, we note that if $k \in A_2$ then $|N_k| = 2\kappa$, and if $k \in A_3$ then $|N_k| \geq 2\kappa +1$. Hence

\begin{equation}\label{eqn:A2A3}
\sum_{k \in A_2 \cup A_3}|N_k| ~\geq~ 2\kappa n_2 + (2\kappa +1)n_3.
\end{equation}

Moreover, if $k\neq 0, t \in A_1$ then $|N_k| \geq \kappa$, we also have $|N_0| = 3$ and $|N_t| \geq 1$. Hence

\begin{equation}\label{eqn:ns}
\sum_{k \in A_2 \cup A_3} |N_k| ~=~ n - \sum_{k \in A_1} |N_k| ~\leq~ n - \kappa n_1 + 2\kappa - 4.
\end{equation}
Putting together equations (\ref{eqn:rc}), (\ref{eqn:A2A3}) and (\ref{eqn:ns}), we obtain
\begin{eqnarray*}
rc(D) ~\leq~ n_1 + \frac{1}{\kappa}\sum_{k \in A_2 \cup A_2} |N_k| -1 ~\leq~ n_1 + \frac{1}{\kappa} (n - \kappa n_1 + 2\kappa -4) - 1 ~\leq~ \frac{n}{\kappa} + 1.
\end{eqnarray*}
This gives the required bound on $rc(D)$ and completes the proof of the theorem.

\end{proof}

We note that Theorem \ref{thm:max} can be extended to planar graphs with maximum face size $f$, by defining the $N_k$ recursively, taking $N_{k+1}$ to be a 2-connected subset of $N^{\left\lceil\frac{f-1}{2}\right\rceil}(N_k)$-step open neighborhood of $N_k$. However, the constant $C$ grows at least linearly with $f$, so this would not lead to a proof of the general case. 

\section{Acknowledgements}

This research was performed at the University of Minnesota Duluth REU run by Joe Gallian. The REU was supported by the National Science Foundation, the Department of Defense (grant number DMS 1062709), the National Security Agency (grant number H98230-11-1-0224) and Princeton University. 

I would like to thank Joe Gallian for supervising this research. I am also grateful to Eric Riedl, David Rolnick, Samuel Elder and the other participants at the 2012 Duluth REU for their insightful comments and support. 

\bibliographystyle{plain}	% (uses file "plain.bst")
\bibliography{RainbowRefs}		% expects file "myrefs.bib"

\end{document}